\def\NAT@def@citea{\def\@citea{\NAT@separator}}
\theoremstyle{plain}
\newtheorem{theorem}{Theorem}[section]
\newtheorem{proposition}[theorem]{Proposition}
\theoremstyle{definition}
\newtheorem{example}[theorem]{Example}
\theoremstyle{remark}
\newcommand{\fx}{f(x)}
\newcommand{\gx}{g(x)}
\newcommand{\hx}{h(x)}
\newcommand{\sqfx}{\sqrt{f(x)}}
\newcommand{\sqgx}{\sqrt{g(x)}}
\newcommand{\sqhx}{\sqrt{h(x)}}
\renewcommand{\Re}{\text{Re}}
\renewcommand{\Im}{\text{Im}}
\newenvironment{solution}{\begin{proof}[Solution]}{\end{proof}}
\newcommand{\Gkioulekas}{\mbox{}Gkioulekas\mbox{}}
\let\emptyset\varnothing
\begin{document}

\articletype{}
\title{Solving parametric radical equations with depth 2 rigorously using the restriction set method}

\author{
\name{Eleftherios Gkioulekas\textsuperscript{a}\thanks{CONTACT Eleftherios Gkioulekas. Email: drlf@hushmail.com}}
\affil{\textsuperscript{a}School of Mathematical and Statistical Sciences, University of Texas Rio Grande Valley, 1201 West University Drive, Edinburg, TX 78539-2999}
}

\maketitle

\begin{abstract}
We review the history and previous literature on radical equations and present the rigorous solution theory for radical equations of depth 2, continuing a previous study of radical equations of depth 1. Radical equations of depth 2 are equations where the unknown variable appears under at least one square root and where two steps are needed to eliminate all radicals appearing in the equation. We state and prove theorems for all three equation forms with depth 2 that give the solution set of all real-valued solutions. The theorems are shown via the restriction set method that uses inequality restrictions to decide whether to accept or reject candidate solutions. We distinguish between formal solutions that satisfy the original equation in a formal sense, where we allow some radicals to evaluate to imaginary numbers during verification, and strong solutions, where all radicals evaluate to real numbers during verification. Our theorems explicitly identify the set of all formal solutions and the set of all strong solutions for each equation form. The theory underlying radical equations with depth 2 is richer and more interesting than the theory governing radical equations with depth 1, and some aspects of the theory are not intuitively obvious. It is illustrated with examples of parametric radical equations.
\end{abstract}

\begin{keywords}
Radical equations, extraneous solutions, equations with radicals
\end{keywords}

\begin{amscode}
00A05, 01A55, 97H30
\end{amscode}

\section{Introduction}

In this article  we present the rigorous theory for solving radical equations of depth 2, and illustrate its application with examples of parametric radical equations. We define radical equations to be equations in which the unknown variable appears at least once under a square root. By depth 2, we mean that two steps are needed to eliminate all radicals and reduce the equation to a polynomial or rational equation. We say that an equation is parametric when it has known variables (parameters) and the goal is to find the unknown variables in terms of the parameters, such that the equation is satisfied. We limit our scope to equations where all radicals are square roots and to seeking real-valued solutions. The article is a continuation of  previous work \cite{article:Gkioulekas:radicals-one} where a rigorous theory was presented for solving several forms of radical equations of depth 1. The goal is to be able to find all real-valued solutions of a radical equation and to be able to eliminate all of the extraneous solutions, without having to inconveniently verify them by substituting each candidate solution to the original equation. We also highlight and address an ambiguity in properly defining the set of real-valued solutions of a radical equation. The main advantage of the proposed approach is that it makes it practical to study the general solution of parametric radical equations in terms of one or more parameters. 

The research literature on radical equations is not extensive, and was  reviewed in a previous article~\cite{article:Gkioulekas:radicals-one}. Nagase~\cite{article:Nagase:1987} presented the general theory for solving the equation $\sqrt{ax+b} = cx+d$, using a technique by Bompart~\cite{article:Bompart:1982} and Roberti~\cite{article:Roberti:1984}. The general solution of the  equation $\sqrt{ax+b}+\sqrt{cx+d}=A$ was given by Huff \& Barrow~\cite{article:Barrow:1982}. The reverse problem of constructing radical equations following several forms from the desired solutions is also non-trivial and was briefly discussed by Beach~\cite{article:Beach:1952} and Schwartz, Moulton, \& O'Hara~\cite{article:O'Hara:1987}. The solution technique for handling one of the depth 1 radical equation forms was discussed informally by Gurevich~\cite{article:Gurevich:2003}, however an informal discussion of several depth 1 and depth 2 forms was taught to me earlier by my high school instructor \cite{notes:Pistofidis:1989,notes:Pistofidis:1990}, and, as explained in the following,  a nascent  precursor of some of the solution techniques that I have learned from him can be traced back to Fischer \& Schwatt~\cite{book:Schwatt:1898}. An overview of the history of the broader problem of handling the extraneous solutions, when solving rational or radical equations, throughout the 19th and 20th century was given by Manning~\cite{article:Manning:1970}, and we summarize the particulars about radical equations in the following. 

Radical equations first appeared in mathematics textbooks around 1860. At the time, there was no concern about extraneous solutions because the notation $\sqrt{a}$ had a multi-valued interpretation where it could be equal to either zero of the polynomial $p(x) = x^2-a$. Oliver, Wait, \& Jones~\cite{book:Jones:1887} introduced the notation $\sqrt[-]{a}$ and $\sqrt[+]{a}$ to distinguish between the negative and positive zero of $p(x) = x^2-a$, while retaining a multivalued interpretation for $\sqrt{a}$. As we see on page 215, Oliver et al.~\cite{book:Jones:1887} considered the statement $x^2=2$ equivalent to $x=\sqrt{2}$ which, in turn, was seen as equivalent to $x=\sqrt[+]{2} \lor x=\sqrt[-]{2}$. This is not consistent with the modern use of the radical notation, however a multi-valued interpretation of the radical sign and fractional powers is still used today in the context of complex analysis.  Under this multivalued definition, the  statement  $\sqrt{4x+1}=x-5$, for example, was viewed as equivalent to the following statement, under the modern single-valued definition  of the radical sign:
\begin{equation}
\sqrt{4x+1}=x-5 \lor -\sqrt{4x+1}=x+5
\label{eq:multivalued-radicals-eq-one}
\end{equation} 
As a result, any  solution that is an extraneous solution, for one of the two equations in the disjunction given by Eq.~\eqref{eq:multivalued-radicals-eq-one}, will satisfy the other equation and vice versa. Consequently, under the old multivalued definition, the problem of extraneous solutions simply disappears. The same principle applies to equations of depth 2. For example, the equation $\sqrt{x^2-a^2}+\sqrt{x^2+a^2}=bx$, considered in Example~\ref{ex:sum-two-roots-equal-to-function-one},  would have been understood, under the multivalued definition, as equivalent to the following statement, under the modern single-valued definition:
\begin{align*}
\sqrt{x^2-a^2}&+\sqrt{x^2+a^2}=bx \lor
\sqrt{x^2-a^2}-\sqrt{x^2+a^2}=bx \nonumber \\
&\lor
-\sqrt{x^2-a^2}+\sqrt{x^2+a^2}=bx \lor
-\sqrt{x^2-a^2}-\sqrt{x^2+a^2}=bx 
\end{align*}
Again, any extraneous solution obtained from any one of the four equations in the disjunction above will satisfy at least one of the other three equations. Oliver et al.~\cite{book:Jones:1887} expended a substantial amount of effort to present a very rigorous and interesting  theory of radicals, from the bottom up, under the multivalued definition. The reader may also be  interested in the very captivating  biographical memoir  \cite{article:Hill:1896} about the character and career of Professor James Edward Oliver (1829--1895), available from the National Academy of Sciences, for giving insight, not only to a very interesting teacher-scholar, but also for a taste of  American academia during the 19th century. 

The problem of extraneous solutions in radical equations was noticed when mathematicians began attributing a single-valued definition to the radical sign. By 1898, Fischer \& Schwatt~\cite{book:Schwatt:1898} had introduced the term \emph{principal root} for the positive root, and used the notation $\sqrt{a}$ to represent the positive root thus revealing the problem of extraneous solutions in radical equations. On pages 552-554, Fischer \& Schwatt~\cite{book:Schwatt:1898}  showed solved examples of several radical equations, using their principal root definition of the radical notation, consistently with its modern meaning. It is interesting to note that Fischer \& Schwatt~\cite{book:Schwatt:1898} tried using simple contradiction arguments to eliminate extraneous solutions, whenever they could, but without developing the more systematic methodology for these arguments,   that we find in Pistofidis~\cite{notes:Pistofidis:1989,notes:Pistofidis:1990}, Gurevich~\cite{article:Gurevich:2003}, \Gkioulekas~\cite{article:Gkioulekas:radicals-one}, and the present article. Manning~\cite{article:Manning:1970}  noted that a rigorous approach to handling extraneous solutions was abandoned by many textbooks during the 20th century where the broader problem of extraneous solutions was discussed superficially,  with the sole recommendation that they can be eliminated by verification against the original equation. Taylor~\cite{article:Taylor:1910}, Hegeman~\cite{article:Hegeman:1922}, Bruce~\cite{article:Bruce:1931}, and Allendoerfer~\cite{article:Allendoerfer:1966}, tried to revive interest in a more rigorous approach to handling the broader problem of extraneous solutions.  Hegeman~\cite{article:Hegeman:1922}, in particular, noted the following critique of what has now become the standard textbook approach for teaching radical equations:
\begin{quotation}
  ``The student naturally wants to be shown why a root obtained by a
  process which he has been taught to consider correct is not a root
  at all. He is usually told that it is an extraneous root. This
  explains nothing and he is just as puzzled as before''.
\end{quotation}
He proposes that a more transparent pedagogy for solving radical equations is moving all terms of the equation to the left-hand side and then multiplying both sides with rationalizing factors to progressively eliminate the radicals. In the end, the student still needs to verify all solutions against the original equation, but the extraneous solutions can be easily explained as zeroes of the rationalizing factors introduced in the process. The solution techniques presented in Ref.~\cite{article:Gkioulekas:radicals-one} and the present article go one step further and eliminate the need to verify the solutions against the original equation.

Juxtaposed against the standard approach to radical equations and extraneous solutions there are several reasons motivating the current study. First, when solving parametric radical equations in which the equation coefficients depend on one or several parameters, the solutions found may be genuine solutions for some values of the parameters and become extraneous solutions for other values of the parameters. It is not practical to handle such situations by substituting the solutions back to the original equation. Second,  in some cases (see Example~\ref{ex:root-differences-example}) we can rule out the existence of solutions before even solving the equation in the first place. Last, but not least, as was first noted in my previous article \cite{article:Gkioulekas:radicals-one}, the concept of a real-valued solution to a radical equation needs to be carefully defined, and there is a choice between two possible definitions: A \emph{strong solution} is defined to be a real-valued solution that verifies the original equation without encountering any negative numbers under any radical sign. A \emph{formal solution} is defined to be a real-valued solution that verifies the original equation, where, in doing so, we allow radicals to evaluate to imaginary numbers. To the best of my knowledge, this distinction was not previously discussed in the literature. 

In Ref.~\cite{article:Gkioulekas:radicals-one}, we illustrated this distinction between strong solutions and formal solutions by noting that the equation $\sqrt{1-3x} = \sqrt{x-7}$ is satisfied by $x=2$, but when we substitute $x=2$, the two sides of the equation evaluate to $\sqrt{1-3x} = \sqrt{x-7} = i\sqrt{5}$. According to the definitions just given, $x=2$ is a formal solution but it is not a strong solution. Deciding whether the solution $x=2$ should be accepted or rejected depends on the broader context, which informs whether we need the set of all strong solutions or the set of all formal solutions. For example, limiting ourselves to strong solutions becomes necessary when the problem at hand  is to determine the points of intersection between the graphs of two real-valued functions, or, equivalently, the points of intersection between the graph of a real-valued function and the $x$-axis.  

In my previous article  \cite{article:Gkioulekas:radicals-one}, we gave the rigorous solution procedures for the depth 1 radical equations following the forms given by
\begin{align}
&\sqfx = \sqgx, \nonumber \\
&\sqfx = \gx, \nonumber \\
&\sqrt{f_1 (x)}+\sqrt{f_2 (x)}+\cdots +\sqrt{f_n (x)} = 0. \label{eq:radical-form-three}
\end{align}
In this article we consider the following radical equations with depth 2: 
\begin{align}
&\sqfx+\sqgx = \hx, \label{eq:radical-form-four} \\
&\sqfx+\sqgx = \sqhx, \label{eq:radical-form-five} \\
&\sqfx - \sqgx = \hx. \label{eq:radical-form-six}
\end{align}
The functions $f,g,h$ are  either polynomials or rational functions. Note that certain additional forms reduce to the aforementioned forms given by Eq.~\eqref{eq:radical-form-four}, Eq.~\eqref{eq:radical-form-five}, and Eq.~\eqref{eq:radical-form-six}. For example, $\sqfx-\sqgx = \sqhx$ is equivalent to $\sqgx+\sqhx = \sqfx$, which follows the form of Eq.~\eqref{eq:radical-form-five}. Likewise, $\sqfx-\sqgx=-\sqhx$ reduces to $\sqfx+\sqhx = \sqgx$, following the form of Eq.~\eqref{eq:radical-form-five}, and equations of the form $\sqfx+\sqgx = -\sqhx$ reduce to $\sqfx+\sqgx+\sqhx =0$, which follows the form of Eq.~\eqref{eq:radical-form-three}. It is therefore sufficient to consider solution techniques only for equations following the forms of Eq.~\eqref{eq:radical-form-four}, Eq.~\eqref{eq:radical-form-five}, and Eq.~\eqref{eq:radical-form-six} in order to take care of all possibilities with depth 2. 

Our main results are Proposition~\ref{prop:sum-two-roots-equal-to-function}, Proposition~\ref{prop:sum-of-two-roots-equal-root}, and Proposition~\ref{prop:root-difference0-prop-five} which justify the proposed solution procedures for radical equations that follow the forms of Eq.~\eqref{eq:radical-form-four}, Eq.~\eqref{eq:radical-form-five}, Eq.~\eqref{eq:radical-form-six}. The propositions explicitly identify the set of all strong solutions as well as the wider set of all formal solutions. They are illustrated with Example~\ref{ex:sum-two-roots-equal-to-function-one}, Example~\ref{ex:sum-of-two-roots-equal-root}, and Example~\ref{ex:root-differences-example}, where we demonstrate the power of the propositions on general types of parametric radical equations. For the sake of brevity, we employ a direct application of the propositions on the given examples. However, we also describe more informal solution procedures, justified by the propositions, that can be easily used in a more pedagogical teaching context, for simpler problems that are not parameter dependent.

 We find that for radical equations that follow the form of Eq.~\eqref{eq:radical-form-four}, the set of all formal solutions is always equal to the set of all strong solutions, whereas this will not necessarily be true for radical equations that follow the form of Eq.~\eqref{eq:radical-form-five} or Eq.~\eqref{eq:radical-form-six}. Unlike the case of radical equations with depth 1, our results for the set of all formal solutions given by Proposition~\ref{prop:sum-of-two-roots-equal-root} and Proposition~\ref{prop:root-difference0-prop-five}  are not intuitively obvious and a careful proof is needed to justify them. Furthermore, the constraints needed to eliminate the extraneous solutions are not immediately apparent from the initial form of the original radical equation. 

This article is organized as follows. Section 2 discusses the solution of equations following the form of Eq.~\eqref{eq:radical-form-four}. Equations following the form of Eq.~\eqref{eq:radical-form-five}  are discussed in Section 3 and equations following the form of Eq.~\eqref{eq:radical-form-six} are discussed in Section 4. The article is concluded with Section 5, where we also discuss how the main results can be incorporated in the undergraduate-level teaching of mathematics. In some of the solved examples it is necessary to determine whether the zeroes of some quadratic equation lie within particular intervals. A practical technique for doing so is given in Appendix~\ref{app:numbers-relative-to-zeroes}. 


\section{Sum of two radicals equal to a function}

For equations following the form $\sqfx+\sqgx = \hx$ we will show, with Proposition~\ref{prop:sum-two-roots-equal-to-function},  that the set of all formal solutions coincides with the set of all strong solutions. Furthermore, Proposition~\ref{prop:sum-two-roots-equal-to-function} suggests the following informal solution technique:
\begin{enumerate}
\item We require that $\hx\geq 0 \ifonlyif\cdots\ifonlyif x\in A_1$. 
\item We raise both sides to power 2 and obtain: 
\begin{align*}
\sqfx+\sqgx &= \hx
\ifonlyif
(\sqfx+\sqgx)^2 = [\hx]^2 \\
&\ifonlyif
\fx+2\sqrt{\fx\gx}+\gx = [\hx]^2  \\
&\ifonlyif
2\sqrt{\fx\gx} = [\hx]^2-\fx-\gx.
\end{align*}
\item Before raising both sides to power 2 again, we introduce the requirement 
\begin{equation*}
[\hx]^2-\fx-\gx \geq 0 \ifonlyif\cdots\ifonlyif x\in A_2.
\end{equation*}
\item We raise to power 2 again and obtain the set $S_0$ of all candidate solutions:
\begin{align*}
2\sqrt{\fx\gx} &= [\hx]^2-\fx-\gx \\
&\ifonlyif
4\fx\gx = ([\hx]^2-\fx-\gx)^2 \\
&\ifonlyif
\cdots\ifonlyif
x\in S_0.
\end{align*}
\item We accept all solutions in $S_0$ that belong to both $A_1$ and $A_2$. The solution set $S$ for all strong solutions is given by $S=S_0 \cap A_1 \cap A_2$. This is also the set of all formal solutions. 
\end{enumerate}

The notation $\ifonlyif$ represents logical equivalence (``if and only if''), with its use being essential when writing the solution of equations rigorously,  and the ellipsis $\cdots$ represent problem-dependent algebraic steps. In this procedure, steps (3) and (4) are justified by Proposition~\ref{prop:one-radical}, however steps (1) and (2) require the broader argument of Proposition~\ref{prop:sum-two-roots-equal-to-function}.  In Example~\ref{ex:sum-two-roots-equal-to-function-one}, we use a direct application of Proposition~\ref{prop:sum-two-roots-equal-to-function} to solve the equation $\sqrt{x^2-a^2}+\sqrt{x^2+a^2}=bx$ for all $a,b\in\bbR$ under the assumption that $a\neq 0$. It should be noted that it can be tricky finding interesting examples for this particular radical equation form, as many of the simplest cases tend to reduce to cubic or quartic equations, or even polynomial equations of higher order.

Proposition~\ref{prop:one-radical},  corresponding to the solution of equations that follow the form $\sqfx = \gx$, was shown as Proposition 4.2 of Ref.~\cite{article:Gkioulekas:radicals-one}.
\begin{proposition}
Consider the equation $\sqfx=\gx$ with $f: A\to\bbR$ and $g: B\to\bbR$ polynomial or rational functions with $A\subseteq\bbR$ and $B\subseteq\bbR$. The set $S_1$ of all strong solutions and the set $S_2$ of all formal solutions to the equation are given by 
\begin{align*}
S_1 &= S_2 = S_0 \cap A_1, \\
S_0 &= \{ x\in A\cap B \;|\; \fx = [\gx]^2 \}, \\
A_1 &= \{ x\in A\cap B \;|\; \gx\geq 0 \}.
\end{align*}
\label{prop:one-radical}
\end{proposition}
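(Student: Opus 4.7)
The plan is to chain definitions and deduce both set equalities at once. Since every strong solution is automatically a formal solution, the inclusion $S_1 \subseteq S_2$ is free, so it suffices to establish $S_0 \cap A_1 \subseteq S_1$ and $S_2 \subseteq S_0 \cap A_1$; together these will force $S_1 = S_2 = S_0 \cap A_1$. For the first inclusion, given $x_0 \in S_0 \cap A_1$, the identity $f(x_0) = [g(x_0)]^2 \geq 0$ means $\sqrt{f(x_0)}$ is a real principal square root equal to $|g(x_0)|$, and since $g(x_0) \geq 0$ this equals $g(x_0)$; hence $x_0$ verifies the equation strongly.

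The more substantive direction is $S_2 \subseteq S_0 \cap A_1$. Suppose $x_0 \in A \cap B$ is a formal solution, so that $\sqrt{f(x_0)} = g(x_0)$ holds under the convention that the left-hand side is allowed to evaluate to an imaginary number. The decisive observation is that $g$ is real-valued by hypothesis, so $g(x_0) \in \bbR$; this forces $\sqrt{f(x_0)}$ to be real as well, which in turn forces $f(x_0) \geq 0$. Once $f(x_0) \geq 0$ is established, the radical denotes the non-negative principal square root, so $g(x_0) = \sqrt{f(x_0)} \geq 0$ places $x_0$ in $A_1$, and squaring gives $f(x_0) = [g(x_0)]^2$, placing $x_0$ in $S_0$. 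As a by-product, this argument simultaneously shows that every formal solution is actually strong, collapsing the two notions.

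The only anticipated subtlety is resisting the temptation to treat a formal solution as genuinely complex-valued in the verification step. Because $g$ takes only real values on $B$, the formal/strong distinction collapses here, and recognizing this is really the whole content of the proposition. I expect no such automatic collapse to hold for the depth-2 forms treated later, which is precisely why the subsequent propositions in the paper will require genuinely more delicate arguments than this one.
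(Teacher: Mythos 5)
Your proof is correct: the chain $S_0\cap A_1\subseteq S_1\subseteq S_2\subseteq S_0\cap A_1$ is airtight, and the key step (a purely imaginary, nonzero $\sqrt{f(x_0)}$ cannot equal the real number $g(x_0)$, so $f(x_0)\geq 0$ and the formal/strong distinction collapses) is exactly the content of the result. The paper itself gives no proof of this proposition, deferring to Proposition 4.2 of the cited earlier article, where the argument is presented as an equivalence chain $\sqrt{f(x)}=g(x)\iff f(x)=[g(x)]^2\land g(x)\geq 0$; your double-inclusion formulation is the same argument in set-theoretic dress.
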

\noindent
Equivalently we may write: 
\begin{equation*}
\sqfx = \gx \ifonlyif \systwo{\fx=[\gx]^2}{\gx\geq 0.}
\end{equation*}
Here we use the braces notation as a shorthand for the logical ``and'' Boolean operation (i.e. conjunction) and we shall continue to do so throughout this article. It is worth noting that the assumption that the functions $f$ and $g$ be polynomials or rational functions is needed solely to justify the claims made about the set of all formal solutions. If we would like to determine only the set of all strong solutions, then this assumption can be removed, making it possible to use Proposition 1 to solve, by recursive application,  radical equations  with higher depths. The main challenge in the development of a rigorous theory for radical equations rests with determining  the formal solutions set. We now state Proposition~\ref{prop:sum-two-roots-equal-to-function} and present the corresponding Example~\ref{ex:sum-two-roots-equal-to-function-one}:
\begin{proposition}
Consider the equation $\sqfx+\sqgx=\hx$ with $f: A\to\bbR$ and $g: B\to\bbR$ and $h: C\to\bbR$ polynomial or rational functions with $A\subseteq\bbR$ and $B\subseteq\bbR$ and $C\subseteq\bbR$. The set of all strong solutions $S_1$ and the set of all formal solutions $S_2$ are both given by 
\begin{align*}
S_1 &= S_2 = S_0 \cap A_1 \cap A_2, \\
S_0 &= \{ x\in A\cap B\cap C \;|\; 4\fx\gx = [(\hx)^2-\fx-\gx]^2 \}, \\
A_1 &= \{ x\in A\cap B \cap C \;|\; \hx\geq 0\}, \\
A_2 &= \{ x\in A\cap B \cap C \;|\;  (\hx)^2-\fx-\gx \geq 0 \}.
\end{align*}
\label{prop:sum-two-roots-equal-to-function}
\end{proposition}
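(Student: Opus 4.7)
My plan is to prove $S_1 = S_2 = S_0 \cap A_1 \cap A_2$ by splitting it into two equalities. First I would establish $S_1 = S_0 \cap A_1 \cap A_2$ by running the informal solution procedure as a chain of logical equivalences, with Proposition~\ref{prop:one-radical} doing the heavy lifting at the inner layer. Then I would show $S_2 = S_1$ by a short argument that separates real and imaginary parts; the inclusion $S_1 \subseteq S_2$ is immediate from the definitions, so only $S_2 \subseteq S_1$ has substantive content.

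For the forward inclusion $S_1 \subseteq S_0 \cap A_1 \cap A_2$, take a strong solution $x$, so $\fx, \gx \geq 0$ and $\sqfx + \sqgx = \hx$ with every radical real. Nonnegativity of the left side gives $\hx \geq 0$, so $x \in A_1$. A first squaring and rearrangement produces $2\sqrt{\fx\gx} = (\hx)^2 - \fx - \gx$ whose left side is nonnegative, placing $x$ in $A_2$; a second squaring yields the $S_0$ identity.

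For the reverse inclusion $S_0 \cap A_1 \cap A_2 \subseteq S_1$, apply Proposition~\ref{prop:one-radical} to the auxiliary depth-one equation $\sqrt{\fx\gx} = [(\hx)^2 - \fx - \gx]/2$. Its squared form is precisely the $S_0$ identity and its right-hand side is nonnegative precisely on $A_2$, so Proposition~\ref{prop:one-radical} returns the auxiliary equation as equivalent to $x \in S_0 \cap A_2$. Multiplying by $2$ and rearranging gives $\fx + 2\sqrt{\fx\gx} + \gx = (\hx)^2$, which one rewrites as $(\sqfx + \sqgx)^2 = (\hx)^2$ using $\sqfx\,\sqgx = \sqrt{\fx\gx}$. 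Finally, $\sqfx + \sqgx \geq 0$ and $\hx \geq 0$ (by $A_1$), so taking nonnegative square roots concludes $\sqfx + \sqgx = \hx$, a strong solution.

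For $S_2 \subseteq S_1$, suppose $x$ is a formal solution. Under the convention $\sqrt{a} = i\sqrt{-a}$ for $a < 0$, the imaginary part of $\sqfx + \sqgx$ is a sum of two nonnegative contributions, strictly positive whenever $\fx < 0$ or $\gx < 0$. Since $\hx$ is real, this imaginary part must vanish, forcing $\fx, \gx \geq 0$. Hence both radicals are actually real and $x \in S_1$.

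The main obstacle is the step $\sqfx\,\sqgx = \sqrt{\fx\gx}$ in the reverse inclusion: this identity is valid precisely when both radicands are nonnegative, whereas the $S_0$ relation only yields the product nonnegativity $\fx\gx \geq 0$. Making sure that the combined conditions $S_0$, $A_1$, and $A_2$ genuinely force $\fx \geq 0$ and $\gx \geq 0$ separately, rather than merely the weaker $\fx\gx \geq 0$, is the most delicate point of the argument, and is where the polynomial/rational hypotheses on $f$, $g$, $h$ together with the careful application of Proposition~\ref{prop:one-radical} must be invoked.
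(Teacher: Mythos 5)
Your architecture is the same as the paper's: $S_1$ comes from the double-squaring chain anchored on Proposition~\ref{prop:one-radical}, and $S_2=S_1$ comes from forcing the imaginary parts of the radicals to vanish because $\hx$ is real. Your forward inclusion $S_1\subseteq S_0\cap A_1\cap A_2$ and your argument for $S_2\subseteq S_1$ are both correct and essentially coincide with the paper's opening cases. The genuine gap is the one you name and then leave open: the reverse inclusion $S_0\cap A_1\cap A_2\subseteq S_1$ needs $\sqfx\,\sqgx=\sqrt{\fx\gx}$, hence $\fx\geq 0$ and $\gx\geq 0$ separately, and you give no argument that membership in $S_0\cap A_1\cap A_2$ supplies this. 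Hoping that the polynomial/rational hypotheses will rescue the step is not a proof, and in fact they cannot: those hypotheses play no role in this point.

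Here is what is actually true. Inside $S_0\cap A_2$ one has $\fx\gx\geq 0$ and $2\sqrt{\fx\gx}=[\hx]^2-\fx-\gx$. The mixed sign patterns are excluded outright ($\fx<0<\gx$ contradicts $\fx\gx\geq 0$, and $\fx<0=\gx$ forces $[\hx]^2=\fx<0$), while $\fx\leq 0\land\gx\leq 0$ gives $[\hx]^2=\fx+\gx+2\sqrt{\fx\gx}=-\bigl(\sqrt{-\fx}-\sqrt{-\gx}\bigr)^2\leq 0$, hence $\hx=0$ and $\fx=\gx$. So the only points of $S_0\cap A_1\cap A_2$ where your step can fail are those with $\fx=\gx<0$ and $\hx=0$; there the original equation reads $2i\sqrt{-\fx}=0$, which is false, so such points are not solutions of any kind yet pass all three filters. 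A concrete instance is $\fx=\gx=-1$, $\hx=x$: the point $x=0$ lies in $S_0\cap A_1\cap A_2$ although $\sqrt{-1}+\sqrt{-1}=x$ has no real solution. The paper's proof takes the opposite order---it first shows every formal solution satisfies $\fx\geq 0\land\gx\geq 0$ and only then runs the chain of equivalences inside that region, where $\sqfx\sqgx=\sqrt{\fx\gx}$ is automatic---but its last line likewise drops the factor $\{\fx\geq 0\land\gx\geq 0\}$ from the solution set without justification, i.e., it assumes exactly the containment you flagged. The clean repair, for your proof and for the statement itself in such degenerate cases, is to adjoin the restriction $\fx\geq 0$ (which, inside $S_0\cap A_2$, implies $\gx\geq 0$), or equivalently to exclude the set $\{x\;:\;\fx=\gx<0\land\hx=0\}$; with that amendment your reverse inclusion goes through exactly as you sketched it.
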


\begin{proof}
We claim that all formal solutions to the original equation satisfy $\fx\geq 0$ and $\gx\geq 0$ which implies that they are also going to be strong solutions. To show the claim, let us assume that there is a formal solution $x\in A \cap B \cap C$ to the equation such that $\fx <0 \lor \gx <0$. We note that, by definition, $\hx\in\bbR$ and distinguish between the following cases:

\noindent
\emph{Case 1:} Assume that $\fx <0 \land \gx\geq 0$. Then, we have 
\begin{align*}
\systwo{\sqgx\in [0,+\infty)}{\exists a\in (0,+\infty): \sqfx = ai.}
\end{align*}
Choose an $a\in (0,+\infty)$ such that $\sqfx = ai$ and note that
\begin{align*}
 \Im (\hx) &= \Im (\sqfx+\sqgx) && [\text{via the equation}] \\
&= \Im (\sqfx) && [\sqgx\in [0, +\infty)] \\
&= \Im (ai) = a\neq 0 \implies  \hx\not\in\bbR.
\end{align*}
This is a contradiction, since we know that $\hx\in\bbR$. 

\noindent
\emph{Case 2:} Assume that $\fx\geq 0 \land \gx <0$. This case also leads to a contradiction using the same argument as in Case 1.

\noindent
\emph{Case 3:} Assume that $\fx <0 \land \gx <0$. Then, we have 
\begin{equation*}
\exists a,b \in (0,+\infty): (\sqfx = ai \land \sqgx =bi).
\end{equation*}
Choose $a,b \in (0,+\infty)$ such that $\sqfx = ai$ and $\sqgx = bi $, and note that
\begin{align*}
\Im (\hx) &= \Im (\sqfx+\sqgx) && [\text{via the equation}]  \\
&= \Im (ai+bi) = a+b > a && [b\in (0,+\infty)] \\
&> 0 && [a\in (0, +\infty)] \\
&\implies \hx\not\in\bbR.
\end{align*}
This is a contradiction, since $\hx\in\bbR$.

Since all cases above yield a contradiction, we conclude that there are no formal solutions that violate the condition $\fx\geq 0 \land \gx\geq 0$, and this proves the claim. 

To solve the equation, let $x\in A\cap B\cap C$ such that $\fx\geq 0 \land \gx\geq 0$. We distinguish between the following cases. 

\noindent
\emph{Case 1:} Assume that $\hx <0$. Then, we have 
\begin{align*}
\systwo{\fx\geq 0}{\gx\geq 0}
&\implies
\systwo{\sqfx\geq 0}{\sqgx\geq 0}
\implies
\sqfx+\sqgx \geq 0 \\
&\implies
\sqfx+\sqgx \neq \hx.
\end{align*}
It follows that the equation has no formal solutions with $\hx <0$. 

\noindent
\emph{Case 2:} Assume that $\hx\geq 0$. Then, we can write,
\begin{align}
 \sqfx+\sqgx=\hx 
&\ifonlyif
 (\sqfx+\sqgx)^2=[\hx ]^2 \nonumber \\
&\ifonlyif
\fx+2\sqfx\sqgx+\gx=[\hx]^2 \nonumber \\
&\ifonlyif
2\sqrt{\fx\gx} = [\hx]^2-\fx-\gx. \label{eq:sum-two-roots-equal-to-function-two}
\end{align}
The first equivalence is justified in both directions because we have established that both sides of the equation $\sqfx+\sqgx=\hx$ are positive or zero. Using Proposition~\ref{prop:one-radical}, it follows from Eq.~\eqref{eq:sum-two-roots-equal-to-function-two}  that,
\begin{align*}
\text{Eq.~\eqref{eq:sum-two-roots-equal-to-function-two}}
\ifonlyif
\systwo{4\fx\gx = ((\hx )^2-\fx-\gx)^2}{(\hx)^2-\fx-\gx \geq 0}.
\end{align*}

From the above argument, we conclude that 
\begin{align*}
\sqfx+\sqgx=\hx &\ifonlyif
\systhree{4\fx\gx = ((\hx)^2-\fx-\gx)^2}{(\hx)^2-\fx-\gx \geq 0}{\fx\geq 0 \land \gx\geq 0 \land \hx\geq 0} \\
&\ifonlyif x\in S_1 = S_0 \cap A_1 \cap A_2,
\end{align*}
which concludes our proof.
\end{proof}

\begin{example}
The equation $\sqrt{x^2-a^2}+\sqrt{x^2+a^2}=bx$ with $a\neq 0$ has two candidate solutions:
\begin{equation*}
x_1 = -\left[ \frac{4a^4}{(2-b)(2+b)b^2}\right]^{1/4} \quad\text{ and } x_2 = +\left[ \frac{4a^4}{(2-b)(2+b)b^2}\right]^{1/4}
\end{equation*}
which are accepted or rejected as follows:
\begin{enumerate}
\item If $b\in (-\infty,-2]\cup (-\sqrt{2},\sqrt{2})\cup [2,+\infty)$, then both $x_1$ and $x_2$ are rejected
\item If $b\in (-2, -\sqrt{2}]$, then $x_1$ is accepted as a strong solution and $x_2$ is rejected. 
\item If $b\in [\sqrt{2}, 2)$, then $x_2$ is accepted as a strong solution and $x_1$ is rejected
\end{enumerate}
\label{ex:sum-two-roots-equal-to-function-one}
\end{example}

\begin{solution}
We apply Proposition~\ref{prop:sum-two-roots-equal-to-function} using $\fx = x^2-a^2$, $\gx = x^2+a^2$, and $\hx = bx$, all of which are defined on $\bbR$. The restriction set $A_1$ is obtained from the requirement 
\begin{equation*}
x\in A_1 \ifonlyif \hx\geq 0 \ifonlyif bx\geq 0,
\end{equation*}
and it follows that
\begin{equation*}
A_1=\casethree{[0,+\infty)}{b>0}{\bbR}{b=0}{(-\infty,0]}{b<0.}
\end{equation*}
For the restriction set $A_2$, we note that
\begin{equation*}
[\hx]^2-\fx-\gx = (bx)^2-(x^2-a^2)-(x^2+a^2) = (b^2-2)x^2,
\end{equation*}
and therefore,
\begin{align*}
x\in A_2 &\ifonlyif [\hx]^2-\fx-\gx \geq 0 \ifonlyif  (b^2-2)x^2 \geq 0 \\
&\ifonlyif (b-\sqrt{2})(b+\sqrt{2})x^2\geq 0 \\
&\ifonlyif (b-\sqrt{2})(b+\sqrt{2})\geq 0 \lor x=0 \\
&\ifonlyif b\in (-\infty,-\sqrt{2}]\cup [\sqrt{2},+\infty) \lor x=0,
\end{align*}
and the result is that when $b\in (-\infty,-\sqrt{2}]\cup [\sqrt{2},+\infty)$, then there is no additional restriction on $x$, otherwise $x$ has to satisfy $x=0$. It follows that
\begin{equation*}
A_2=\casetwo{\bbR}{b\in (-\infty,-\sqrt{2}]\cup [\sqrt{2},+\infty)}{\{0\}}{b\in (-\sqrt{2}, \sqrt{2}).}
\end{equation*}
To find the set $S$ of all formal/strong solutions, we let $x\in A_1\cap A_2$ be given, and, via Proposition~\ref{prop:sum-two-roots-equal-to-function}, it follows that
\begin{align}
x\in S &\ifonlyif 4\fx\gx = [(\hx)^2-\fx-\gx]^2 \nonumber \\
&\ifonlyif 4(x^2-a^2)(x^2+a^2)=[(b^2-2)x^2]^2 \nonumber \\
&\ifonlyif (2-b)(2+b)b^2 x^4 = 4a^4. \label{eq:sum-two-roots-equal-to-function-eq-two}
\end{align}
At this point we need to know whether the coefficient of $x^4$ is positive, negative, or zero, consequently we distinguish between the following cases:

\noindent
\emph{Case 1:} Assume that $b\in\{-2,0,2\}$. Then, for all $x\in A_1\cap A_2$, we have
\begin{equation*}
\text{Eq.}~\eqref{eq:sum-two-roots-equal-to-function-eq-two} \ifonlyif 0x^4 = 4a^4 \ifonlyif x\in S_0,
\end{equation*}
with $S_0 = \emptyset$. It follows that the set of all strong and all formal solutions is given by: $S = S_0\cap A_1\cap A_2 = \emptyset\cap A_1\cap A_2 = \emptyset$. 

\noindent
\emph{Case 2:} Assume that $b\in (-\infty, -2)\cup (2,+\infty)$. It follows that $(2-b)(2+b)b^2 <0$, consequently for all $x\in A_1\cap A_2$, we have:
\begin{equation*}
\text{Eq.}~\eqref{eq:sum-two-roots-equal-to-function-eq-two}\ifonlyif x^4 = \frac{4a^4}{(2-b)(2+b)b^2} < 0 \ifonlyif x\in S_0,
\end{equation*}
with $S_0 = \emptyset$. Similarly to Case 1,  it follows that the set of all strong and all formal solutions is given by $S = \emptyset$. 

\noindent
\emph{Case 3:} Assume that $b\in (-2,-\sqrt{2}]\cup [\sqrt{2},2)$. Then we have $(2-b)(2+b)b^2 > 0$, and therefore for all $x\in A_1\cap A_2$, it follows that 
\begin{equation*}
\text{Eq.}~\eqref{eq:sum-two-roots-equal-to-function-eq-two}  \ifonlyif x^4 = \frac{4a^4}{(2-b)(2+b)b^2} > 0 \ifonlyif x\in S_0,
\end{equation*}
with $S_0$ given by
\begin{equation*}
S_0 = \left\{-\left[ \frac{4a^4}{(2-b)(2+b)b^2}\right]^{1/4}, +\left[\frac{4a^4}{(2-b)(2+b)b^2} \right]^{1/4}\right\}.
\end{equation*}
Since the sets $A_1$ and $A_2$ are given by
\begin{equation*}
A_1 = \casetwo{[0,+\infty)}{b\in [\sqrt{2},2)}{(-\infty, 0]}{b\in (-2, -\sqrt{2}],}
\end{equation*}
and $A_2 = \bbR$ if $b\in (-2,-\sqrt{2}]\cup [\sqrt{2},2)$, we conclude that the set of all strong and all formal solutions is given by
\begin{align*}
S &= S_0\cap A_1\cap A_2 = S_0 \cap A_1\cap \bbR = S_0\cap A_1 \\
&= \casetwo{\ds\left\{+\left[ \frac{4a^4}{(2-b)(2+b)b^2}\right]^{1/4}\right\}}{b\in [\sqrt{2}, 2)}{\ds\left\{-\left[ \frac{4a^4}{(2-b)(2+b)b^2}\right]^{1/4}\right\}}{b\in (-2,-\sqrt{2}].}
\end{align*}

\noindent
\emph{Case 4:} Assume that $b\in (-\sqrt{2},0)\cup (0,\sqrt{2})$. Then,  we have $(2-b)(2+b)b^2 > 0$ and using the same argument that we have used for case 3 we find that
\begin{equation*}
\text{Eq.}~\eqref{eq:sum-two-roots-equal-to-function-eq-two}  \ifonlyif x\in S_0,
\end{equation*}
with $S_0$ given by
\begin{equation*}
S_0 = \left\{-\left[ \frac{4a^4}{(2-b)(2+b)b^2}\right]^{1/4}, +\left[\frac{4a^4}{(2-b)(2+b)b^2} \right]^{1/4}\right\}.
\end{equation*}
Since $A_2 = \{0\}$, and the assumption $a\neq 0$ implies that $S_0\cap A_2 = \emptyset$, it follows that the set of all strong and all formal solutions is given by
\begin{equation*}
S = S_0\cap A_1\cap A_2 = \emptyset\cap A_1 = \emptyset.
\end{equation*}

From the previous arguments given for each of the previous four cases, we conclude that the solution set $S$ of all strong and all formal solutions is given by
\begin{equation*}
S = \casethree{\emptyset}{b\in (-\infty,-2]\cup (-\sqrt{2},\sqrt{2})\cup [2,+\infty)}{\ds\left\{+\left[ \frac{4a^4}{(2-b)(2+b)b^2}\right]^{1/4}\right\}}{b\in [\sqrt{2}, 2)}{\ds\left\{-\left[ \frac{4a^4}{(2-b)(2+b)b^2}\right]^{1/4}\right\}}{b\in (-2, -\sqrt{2}],}
\end{equation*}
with the assumption that $a\neq 0$, which proves the claim.
\end{solution}

\section{Sum of two square roots equal to another square root}

Now, we turn our attention to equations that follow the form $\sqfx+\sqgx = \sqhx$.  Unlike the preceding case, this form tends to yield problems that are easier to solve, from an algebraic point of view. However, the sets of formal and strong solutions do not necessarily coincide. The informal solution technique for finding only the strong solutions is fairly intuitive and proceeds as follows. 
\begin{enumerate}
\item First, we require that all expressions under a square root be positive or zero: 
\begin{equation*}
\fx\geq 0 \land \gx\geq 0 \land \hx\geq 0 
\ifonlyif\cdots\ifonlyif
x\in A_1.
\end{equation*}
\item Then we raise both sides of the equation to the power $2$, which reads: 
\begin{align}
\sqfx &+\sqgx = \sqhx
\ifonlyif
(\sqfx+\sqgx)^2 = \hx \nonumber \\
&\ifonlyif
\fx+2\sqrt{\fx\gx}+\gx = \hx \nonumber \\
&\ifonlyif
2\sqrt{\fx\gx} = \hx-\fx-\gx.
\label{eq:sum-of-two-roots-equal-root-one}
\end{align}
\item Now, we introduce the additional requirement that 
\begin{equation*}
\hx-\fx-\gx \geq 0
\ifonlyif\cdots\ifonlyif
x\in A_2.
\end{equation*}
\item Finally we raise both sides to power $2$ again to eliminate the remaining root: 
\begin{align*}
\text{Eq.}~\eqref{eq:sum-of-two-roots-equal-root-one}
&\ifonlyif
4\fx\gx = (\hx-\fx-\gx)^2 \\
&\ifonlyif\cdots\ifonlyif
x\in S_0.
\end{align*}
\item We accept all solutions of $S_0$ that also belong to $A_1$ and $A_2$. Thus, the set of all strong solutions is given by $S=S_0 \cap A_1 \cap A_2$. 
\end{enumerate}

The requirement in step (1) is due to seeking only strong solutions. It, in turn, justifies step (2), since both sides of the equation will be positive or zero for all $x\in A_1$. Then, the steps (3) and (4) are justified via Proposition~\ref{prop:one-radical}. The new result, provided by Proposition~\ref{prop:sum-of-two-roots-equal-root}, which is not intuitively obvious,  is that if we want to find \emph{all formal solutions}, it is  necessary and sufficient to also accept all solutions that satisfy the restriction 
\begin{equation}
\systwo{\fx\leq 0 \land \gx\leq 0 \land \hx\leq 0}{\hx-\fx-\gx \leq 0.}
\label{eq:sum-of-two-roots-equal-root-formal-sols-condition}
\end{equation}
These additional solutions, if they exist, will not necessarily be strong solutions, but they will indeed satisfy the original equation in a formal sense. This claim is justified via  Proposition~\ref{prop:sum-of-two-roots-equal-root}. It is worth noting that it is a non-trivial result that these additional solutions, if they exist, emerge from the same solution set $S_0$ as the strong solutions. That is because repeating the solution process, under the assumption of Eq.~\eqref{eq:sum-of-two-roots-equal-root-formal-sols-condition}, results in a different equation at step (2) in the above procedure, but reverts back to the same equation at step (4). Example~\ref{ex:sum-of-two-roots-equal-root} uses Proposition~\ref{prop:sum-of-two-roots-equal-root} to derive the general solution of the equation $\sqrt{x+a}+\sqrt{x-a}=\sqrt{x+b}$ with $a>0$ and $b\in\bbR$, where we show that, although the equation reduces to a quadratic equation that always has two candidate solutions, at least one solution is always rejected. With a simple change of variables, the result of Example ~\ref{ex:sum-of-two-roots-equal-root} can be used to handle the more general form $\sqrt{x+a}+\sqrt{x+b}=\sqrt{x+c}$.

\begin{proposition}
Consider the equation $\sqfx+\sqgx = \sqhx$ with $f: A\to\bbR$ and $g: B\to\bbR$ and $h: C\to\bbR$ polynomial or rational functions with $A \subseteq\bbR$ and $B \subseteq\bbR$ and $C \subseteq\bbR$. Then the set of all strong solutions $S_1$ is given by 
\begin{align*}
S_1 &= S_0 \cap A_1 \cap A_2, \\
S_0 &= \{ x\in A\cap B\cap C \;|\;  4\fx\gx = [\hx-\fx-\gx]^2\},  \\
A_1 &= \{ x\in A\cap B\cap C \;|\;  \fx\geq 0 \land \gx\geq 0 \land \hx\geq 0 \}, \\
A_2 &= \{ x\in A\cap B\cap C \;|\;  \hx-\fx-\gx \geq 0 \},
\end{align*}
 and the set $S_2$ of all formal solutions is given by
\begin{align*}
S_2 &= (S_0 \cap A_1 \cap A_2)\cup (S_0 \cap A_3 \cap A_4), \\
A_3 &= \{ x\in A\cap B\cap C \;|\;  \fx\leq 0 \land \gx\leq 0 \land \hx\leq 0 \}, \\ 
A_4 &= \{ x\in A\cap B\cap C \;|\;  \hx-\fx-\gx \leq 0 \}.
\end{align*}
\label{prop:sum-of-two-roots-equal-root}
\end{proposition}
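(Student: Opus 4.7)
The plan is to derive the strong-solution formula by the familiar ``square twice'' procedure justified by Proposition~\ref{prop:one-radical}, and then to obtain the wider formal-solution formula by splitting on the sign of $\hx$ and reducing the ``all nonpositive'' regime to a strong-solution problem for the sign-flipped functions $-f,-g,-h$. The essential new input is a simple sign analysis of $\sqfx$, $\sqgx$, and $\sqhx$, using that under the principal branch each such radical is either nonnegative real or pure imaginary with positive imaginary part.

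For the strong-solution claim, if $x$ is a strong solution then all three of $\sqfx$, $\sqgx$, $\sqhx$ are real, forcing $\fx,\gx,\hx\geq 0$, i.e., $x\in A_1$. Both sides of $\sqfx+\sqgx=\sqhx$ are then nonnegative, so squaring is a genuine equivalence and yields $2\sqrt{\fx\gx}=\hx-\fx-\gx$. Applying Proposition~\ref{prop:one-radical} to this last equation produces the conditions $\hx-\fx-\gx\geq 0$ (i.e., $x\in A_2$) and $4\fx\gx=[\hx-\fx-\gx]^2$ (i.e., $x\in S_0$), and the reverse inclusion follows by running the same chain of equivalences backwards.

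For the formal-solution claim, let $x$ be a formal solution and set $\alpha=\sqfx$, $\beta=\sqgx$, $\gamma=\sqhx$, each of which is nonnegative real when its radicand is $\geq 0$ and pure imaginary with positive imaginary part otherwise. Split on the sign of $\hx$. If $\hx\geq 0$ then $\gamma$ is real, so taking imaginary parts of $\alpha+\beta=\gamma$ forces $\Im\alpha=\Im\beta=0$ (each is nonnegative and they sum to zero), giving $\fx,\gx\geq 0$ and $x\in A_1$; the strong-solution analysis then places $x$ in $S_0\cap A_1\cap A_2$. If $\hx<0$ then $\gamma$ is pure imaginary with positive imaginary part, so taking real parts of $\alpha+\beta=\gamma$ forces $\Re\alpha=\Re\beta=0$, giving $\fx,\gx\leq 0$ and $x\in A_3$. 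In this subcase the equation rewrites, via $\sqrt{t}=i\sqrt{-t}$ for $t\leq 0$, as $\sqrt{-\fx}+\sqrt{-\gx}=\sqrt{-\hx}$, a strong-solution problem in the polynomial/rational functions $-f,-g,-h$. Applying the already-proved strong-solution formula to this auxiliary equation produces $4(-\fx)(-\gx)=[(-\hx)-(-\fx)-(-\gx)]^2$ and $(-\hx)-(-\fx)-(-\gx)\geq 0$, which collapse exactly to $x\in S_0$ and $x\in A_4$. Conversely, any $x\in S_0\cap A_3\cap A_4$ satisfies the auxiliary strong-solution equation, and multiplying through by $i$ recovers the original equation in the formal sense.

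The hard part, and the reason the formal-solution half is not intuitively obvious, is the observation that both branches of $S_2$ share the same polynomial equation $S_0$. This is a genuine algebraic coincidence: the squaring identities $4fg=(h-f-g)^2$ and $4(-f)(-g)=[(-h)-(-f)-(-g)]^2$ are literally identical, while the accompanying linear inequality flips sign, which is why $A_2$ and $A_4$ are opposite one-sided conditions on $\hx-\fx-\gx$. A small bookkeeping point is the boundary where one or more of $\fx,\gx,\hx$ vanishes; since $A_1,A_2,A_3,A_4$ are all defined with non-strict inequalities, such points sit consistently in both branches of the union without any ambiguity.
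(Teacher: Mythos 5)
Your proof is correct and follows essentially the same route as the paper's: Proposition~\ref{prop:one-radical} plus squaring for the strong set, a real/imaginary-part sign analysis of the principal roots to confine formal-but-not-strong solutions to $A_3$, and factoring out $i$ to reduce that regime to the sign-flipped strong problem, which reproduces the same $S_0$ together with the reversed inequality defining $A_4$. The only organizational difference is that you split the elimination argument on the sign of $\hx$ rather than on the signs of $\fx$ and $\gx$, which condenses the paper's three contradiction cases into two.
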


\begin{proof}
\emph{Strong solutions:} We begin with determining first the set $S_1$ of all strong solutions. We note that strong solutions have to satisfy $\fx\geq 0 \land \gx\geq 0 \land \hx\geq 0$. It follows that both sides of the original equation are positive or zero, and we can therefore justify raising both sides to power 2, as was discussed in \cite{article:Gkioulekas:radicals-one}:
\begin{align}
\sqfx+\sqgx = \sqhx
&\ifonlyif
(\sqfx+\sqgx)^2 = \hx \nonumber \\
&\ifonlyif
\fx+2\sqfx\sqgx+\gx = \hx \nonumber \\
&\ifonlyif
2\sqrt{\fx\gx} = \hx-\fx-\gx. \label{eq:sum-of-two-roots-equal-root-eq-one}
\end{align}
 From Proposition~\ref{prop:one-radical}, it follows that 
\begin{align*}
\text{Eq.~\eqref{eq:sum-of-two-roots-equal-root-eq-one}}
&\ifonlyif
\systwo{4\fx\gx = [\hx-\fx-\gx]^2}{\hx-\fx-\gx \geq 0} \\
&\ifonlyif x\in S_1 = S_0 \cap A_1 \cap A_2.
\end{align*}
We conclude that all strong solutions are given by $S_1 = S_0 \cap A_1 \cap A_2$.

\emph{Formal solutions:} Now we investigate whether there exist any additional formal solutions. Let $x\in A\cap B \cap C$ be given and assume it is a formal solution but not a strong solution of the original equation.  We distinguish between the following cases: 

\noindent
\emph{Case 1:} Assume that $\fx >0$ and $\gx <0$. Then 
\begin{align*}
\Re (\sqhx) &= \Re (\sqfx+\sqgx) && [\text{via the equation}] \\
&= \Re (\sqfx) && [\text{via } \gx <0] \\
&\neq 0, && [\text{via } \fx >0]
\end{align*}
and 
\begin{align*}
\Im (\sqhx) &= \Im (\sqfx + \sqgx) 
&& [\text{via the equation}] \\
&= \Im (\sqgx) && [\text{via } \fx >0] \\
&\neq 0. && [\text{via } \gx <0]
\end{align*}
This is a contradiction because\footnote{Here, the notation $\lxor$ represents the ``exclusive or'' Boolean operation.} 
\begin{equation*}
\hx\in\bbR 
\implies 
\hx <0 \lxor \hx\geq 0
\implies 
\Re (\sqhx) = 0 \lxor \Im (\sqhx) = 0.
\end{equation*}

\noindent
\emph{Case 2:} Assume that $\fx <0$ and $\gx >0$. This case also leads to the same contradiction as in the preceding case. 

\noindent
\emph{Case 3:} Assume that $\fx\leq 0$ and $\gx\leq 0$ and $\hx >0$. Then, we have
\begin{align*}
 \Re (\sqhx) &= \Re (\sqfx+\sqgx) && [\text{via the equation}] \\
&= \Re (\sqfx) && [\text{via } \gx\leq 0] \\
&= 0, && [\text{via } \fx\leq 0]
\end{align*}
which is a contradiction because $\hx >0 \implies \Re (\sqhx)\neq 0$. 

From the above argument it follows that all formal solutions that are not strong solutions have to satisfy the condition $\fx\leq 0 \land \gx\leq 0 \land \hx\leq 0$. We now proceed to determine the set of all such additional formal solutions. Let $x\in A\cap B\cap C$ be given such that $\fx\leq 0 \land \gx\leq 0 \land \hx\leq 0$. Then it follows that 
\begin{align}
\sqfx&+\sqgx=\sqhx
\ifonlyif
i\sqrt{-\fx}+i\sqrt{-\gx} = i\sqrt{-\hx} \nonumber \\
&\ifonlyif
\sqrt{-\fx}+\sqrt{-\gx} = \sqrt{-\hx} \label{eq:sum-of-two-roots-equal-root-eq-two-square-step} \\
&\ifonlyif
(\sqrt{-\fx}+\sqrt{-\gx})^2  = -\hx \nonumber \\
&\ifonlyif
-\fx+2\sqrt{-\fx}\sqrt{-\gx}+[-\gx] = -\hx \nonumber \\
&\ifonlyif
2\sqrt{\fx\gx} = \fx+\gx-\hx.
\label{eq:sum-of-two-roots-equal-root-eq-two}
\end{align}
The equivalence after Eq.~\eqref{eq:sum-of-two-roots-equal-root-eq-two-square-step} is justified because, by hypothesis, both sides of the equation are positive or zero. From Proposition~\ref{prop:one-radical}, it follows that 
\begin{align*}
\text{Eq.~\eqref{eq:sum-of-two-roots-equal-root-eq-two}}
&\ifonlyif
\systwo{4\fx\gx = [\fx+\gx-\hx]^2}{\fx+\gx-\hx \geq 0}  \\
&\ifonlyif
\systwo{4\fx\gx = [\hx-\fx-\gx]^2}{\hx-\fx-\gx \leq 0} \\
&\ifonlyif
x \in S_0 \cap A_3 \cap A_4.
\end{align*}
We conclude that the set of all formal solutions includes the additional solutions in $S_0 \cap A_3 \cap A_4$ and it is therefore given by $S_2 = (S_0 \cap A_1 \cap A_2) \cup (S_0 \cap A_3 \cap A_4)$. 
\end{proof}

\begin{example}
The equation $\sqrt{x+a}+\sqrt{x-a}=\sqrt{x+b}$ with $a\in (0,+\infty)$ and $b\in\bbR$ has two candidate solutions
\begin{equation*}
x_1 = \frac{-b-2\sqrt{b^2+3a^2}}{3} 
\text{ and } 
x_2 = \frac{-b+2\sqrt{b^2+3a^2}}{3},
\end{equation*}
which are accepted or rejected as follows:
\begin{enumerate}
\item If $b\in (-\infty,-a]$, then $x_1$ is a formal but not a strong solution and $x_2$ is rejected.
\item If $b\in (-a,a)$, then $x_1$ and $x_2$ are both rejected.
\item If $b\in [a,+\infty)$, then $x_2$ is a strong solution and $x_1$ is rejected. 
\end{enumerate}
\label{ex:sum-of-two-roots-equal-root}
\end{example}

\begin{solution}
We apply Proposition~\ref{prop:sum-of-two-roots-equal-root} using $\fx = x+a$, $\gx=x-a$, and $\hx=x+b$, all defined on $\bbR$. Noting that $\hx-\fx-\gx=b-x$, the corresponding restriction sets are given by
\begin{align*}
A_1 &= \{ x\in \bbR \;|\;  \fx\geq 0 \land \gx\geq 0 \land \hx\geq 0 \} \\
&= \{ x\in \bbR \;|\;  x+a\geq 0 \land x-a\geq 0 \land x+b\geq 0 \} \\
&= [\max\{-a,a,-b\},+\infty) = [\max\{a,-b\},+\infty), \\
A_2 &= \{ x\in\bbR \;|\;  \hx-\fx-\gx \geq 0 \} = \{ x\in\bbR \;|\; b-x \geq 0 \} = (-\infty,b], \\
A_3 &= \{ x\in \bbR \;|\;  \fx\leq 0 \land \gx\leq 0 \land \hx\leq 0 \} \\
&= \{ x\in \bbR \;|\;  x+a\leq 0 \land x-a\leq 0 \land x+b\leq 0 \} \\
&= (-\infty, \min\{-a,a,-b\}] = (-\infty,\min\{-a,-b\}] = (-\infty,-\max\{a,b\}], \\
A_4 &= \{ x\in\bbR \;|\;  \hx-\fx-\gx \leq 0 \} = \{ x\in\bbR \;|\; b-x \leq 0 \} = [b, +\infty).
\end{align*}
For the set $S_0$ of candidate solutions, we have
\begin{align*}
x\in S_0 &\ifonlyif 4\fx\gx = [\hx-\fx-\gx]^2 \\
&\ifonlyif 4(x+a)(x-a)=(b-x)^2 \\
&\ifonlyif 3x^2+2bx-4a^2-b^2=0 \\
&\ifonlyif x= \frac{-b-2\sqrt{b^2+3a^2}}{3} \equiv x_1 \lor x = \frac{-b+2\sqrt{b^2+3a^2}}{3} \equiv x_2.
\end{align*}
To determine whether $x_1, x_2$ can be accepted or rejected, we need to determine their position relative to the numbers $-a,a,b$, so we define $\gf (x) = 3x^2+2bx-4a^2-b^2$ and note that $\gf (-a)=-(a+b)^2$, and $\gf(a)=-(a-b)^2$, and $\gf (b) = 4(b^2-a^2)$. Then, we distinguish between the following cases:

\noindent
\emph{Case 1:} Assume that $b\in (-\infty, -a)$. Then $b<-a<0<a<-b$, and therefore, the restriction set for the strong solutions is given by
\begin{equation*}
A_1\cap A_2 = [\max\{a,-b\},+\infty)\cap (-\infty,b] = [-b,+\infty)\cap (-\infty,b] = \emptyset
\end{equation*}
where we have used $a<-b$ and $b<0$, and the restriction set for any additional formal solutions is given by
\begin{equation*}
A_3\cap A_4 = (-\infty,-\max\{a,b\}]\cap [b,+\infty) = (-\infty,-a]\cap [b,+\infty) = [b,-a]
\end{equation*}
where we have used $b<a$ and $b<-a$. It follows that $x_1, x_2$ are not strong solutions, but they may be formal solutions. To determine that, we note that from $b<-a<0$, we have $\gf (b)=4(b^2-a^2)>0$ and $\gf (-a)=-(a+b)^2 <0$, and, using the argument of Appendix~\ref{app:numbers-relative-to-zeroes}, we obtain $b<x_1<-a<x_2$, and therefore $x_1\in A_3\cap A_4$ and $x_2\not\in A_3\cap A_4$. We conclude that $x_1$ is accepted as a formal solution and $x_2$ is rejected. 

\noindent
\emph{Case 2:} Assume that $b=-a$. Then, the corresponding restriction sets are given by
\begin{align*}
A_1\cap A_2 &= [\max\{a,-b\},+\infty)\cap (-\infty, b] = [\max\{a,a\},+\infty)\cap (-\infty,-a] \\
&= [a,+\infty)\cap (-\infty,-a] = \emptyset, \\
A_3\cap A_4 &= (-\infty,-\max\{a,b\}]\cap [b,+\infty) = (-\infty,-\max\{a,-a\}]\cap [-a,+\infty) \\ 
&= (-\infty,-a]\cap [-a,+\infty)=\{-a\},
\end{align*}
and it follows that both $x_1$ and $x_2$ are not strong solutions. Furthermore. they simplify to $x_1=-a$ and $x_2=5a/3$, therefore $x_1\in A_3\cap A_4$ and is thus a formal solution, whereas $x_2\not\in A_3\cap A_4$, so it is rejected.

\noindent
\emph{Case 3:} Assume that $b\in (-a,a)$. Then $-a<-b<a$ and $-a<b<a$, so it follows that the corresponding restriction sets are given by
\begin{equation*}
A_1\cap A_2 = [\max\{a,-b\},+\infty)\cap (-\infty, b] = [a,+\infty)\cap (-\infty, b] = \emptyset,
\end{equation*}
where we have used $-b<a$ and $b<a$, and likewise
\begin{equation*}
A_3\cap A_4 = (-\infty,-\max\{a,b\}]\cap [b,+\infty) = (-\infty,-a]\cap [b,+\infty) = \emptyset,
\end{equation*}
where we have used $b<a$ and $-a<b$. We conclude that $x_1, x_2$ are both rejected as strong solutions and as formal solutions. 

\noindent
\emph{Case 4:} Assume that $b=a$. Then, the corresponding restriction sets are given by
\begin{align*}
A_1\cap A_2 &= [\max\{a,-b\},+\infty)\cap (-\infty, b] = [\max\{a,-a\},+\infty)\cap (-\infty, a] \\
&= [a,+\infty)\cap (-\infty, a]=\{a\}, \\
A_3\cap A_4 &= (-\infty,-\max\{a,b\}]\cap [b,+\infty) = (-\infty,-a]\cap [a,+\infty)=\emptyset,
\end{align*}
noting that since $a>0$, it follows that $-a<a$. The candidate solutions simplify to $x_1=-5a/3$ and $x_2=a$, consequently $x_1$ is rejected and $x_2$ is accepted as a strong solution. 

\noindent
\emph{Case 5:} Assume that $b\in (a,+\infty)$. Then, we have $-b<-a<0<a<b$, and it follows that the corresponding restriction sets are given by
\begin{equation*}
A_1\cap A_2 = [\max\{a,-b\},+\infty)\cap (-\infty, b] = [a,+\infty)\cap (-\infty, b] = [a,b],
\end{equation*}
where we have used $-b<a$ and $a<b$, and likewise
\begin{equation*}
A_3\cap A_4 = (-\infty,-\max\{a,b\}]\cap [b, +\infty) = (-\infty,-b]\cap [b,+\infty) = \emptyset,
\end{equation*}
where we have used $a<b$ and $b>a>0$. Since $\gf (a)=-(a-b)^2<0$, via $a<b$, and $\gf (b)=4(b^2-a^2)>0$, via $b>a>0$, using the argument of Appendix~\ref{app:numbers-relative-to-zeroes}, we obtain $x_1<a<x_2<b$, which implies that $x_1\not\in A_1\cap A_2$ and $x_2\in A_1\cap A_2$. Noting that there are no additional formal solutions that may not be strong solutions, since $A_3\cap A_4=\emptyset$, we conclude that $x_1$ is rejected and $x_2$ is accepted as a strong solution. 
\end{solution}

\section{Difference of square roots equal to a function}

We conclude by considering radical equations that follow the form $\sqfx-\sqgx = \hx$ where we face the following two difficulties: first, the set of all formal solutions is not necessarily equal to the set of all strong solutions; second, these equations tend to simplify to polynomials of high order from which the extraction of all possible solutions can turn out to be challenging, requiring the use of procedures for solving cubic or quartic equations.

First of all, we begin with the observation that squaring both sides of the original equation form is not a viable solution strategy because, due to the difference of the two square roots on the left-hand-side, there is no practical way to ensure that both sides of the equation will be positive for all formal solutions. Consequently, in order to find all strong solutions, we follow the following strategy instead:
\begin{enumerate}
\item We impose the requirement that  $\fx$ be positive or zero, in order to limit ourselves to strong solutions. Note that the condition $\gx\geq 0$ is not needed by Proposition~\ref{prop:root-difference0-prop-five}
\begin{equation*}
\fx\geq 0 \ifonlyif\cdots\ifonlyif x\in A_1.
\end{equation*}
\item Using Proposition~\ref{prop:one-radical}, we write: 
\begin{align*}
\sqfx-\sqgx = \hx
&\ifonlyif
\sqgx = \sqfx - \hx \\
&\ifonlyif
\systwo{\gx = (\sqfx -\hx)^2}{\sqfx-\hx \geq 0,}
\end{align*}
which results in the additional constraint 
\begin{equation*}
\sqfx-\hx \geq 0 \ifonlyif\cdots\ifonlyif x\in A_2.
\end{equation*}
\item Removing the remaining square root is justified via Proposition~\ref{prop:root-difference0-prop-four}, and the corresponding procedure reads: 
\begin{align*}
\gx &= (\sqfx-\hx)^2 \\
&\ifonlyif
\gx = \fx-2\hx\sqfx+(\hx)^2 \\
&\ifonlyif
2\hx\sqfx = \fx-\gx+(\hx)^2  \\
&\ifonlyif
\systwo{4(\hx)^2 \fx = [\fx-\gx+(\hx)^2]^2}{\hx [\fx-\gx+(\hx)^2] \geq 0.}
\end{align*}
This results in one final constraint: 
\begin{equation*}
\hx[\fx-\gx+(\hx)^2] \geq 0 
\ifonlyif\cdots\ifonlyif
x\in A_3.
\end{equation*}
\item The set of all possible solutions $S_0$ is obtained by solving, 
\begin{equation*}
4(\hx)^2 \fx = [\fx-\gx+(\hx)^2]^2
\ifonlyif\cdots\ifonlyif
x\in S_0,
\end{equation*}
 and the set of all strong solutions $S_1$ is given by $S_1 = S_0\cap A_1 \cap A_2 \cap A_3$. 
\end{enumerate}

It is possible for this equation form to have additional formal solutions that are not strong solutions. If we would like to find all of these additional solutions, then, according to Proposition~\ref{prop:root-difference0-prop-five}, solving the equation system 
\begin{equation*}
\systwo{\fx = \gx <0}{\hx =0,}
\end{equation*}
will result in all additional solutions that are formal solutions of the original radical equation but not strong solutions. 

We shall now state Proposition~\ref{prop:root-difference0-prop-four} and Proposition~\ref{prop:root-difference0-prop-five}. Proposition~\ref{prop:root-difference0-prop-five} justifies the solution procedure, described above, and states the set $S_1$ of all strong solutions and the set $S_2$ of all formal solutions. Proposition~\ref{prop:root-difference0-prop-four} is a mild generalization of Proposition~\ref{prop:one-radical} and is being used in the proof of Proposition~\ref{prop:root-difference0-prop-five}. Proposition~\ref{prop:root-difference0-prop-five} is illustrated with Example~\ref{ex:root-differences-example}, where we consider the equation $\sqrt{2a-x}-\sqrt{x-2b}=x-(a+b)$, under the restriction $a<b$, and show that it has no strong solutions, but does have $x=a+b$ as a formal solution. The case $a\geq b$ is an interesting challenge, and, for the sake of brevity, is left as an exercise to the reader. 

\begin{proposition}
Consider the equation $\hx\sqfx = \gx$ with $f: A\to\bbR$ and $g: B\to\bbR$ and $h: C\to\bbR$ polynomial or rational functions with $A\subseteq\bbR$ and $B\subseteq\bbR$ and $C\subseteq\bbR$. Then the set $S_1$ of all strong solutions is given by 
\begin{align*}
S_1 &= S_0 \cap (A_1\cup A_2), \\
S_0 &= \{ x\in A\cap B\cap C \;|\; [\hx]^2 \fx = [\gx]^2 \}, \\
A_1 &= \{  x\in A\cap B\cap C \;|\;  \gx\hx\geq 0 \land \hx\neq 0 \}, \\
A_2 &=  \{  x\in A\cap B\cap C \;|\;   \hx = 0 \land \fx\geq 0 \},
\end{align*}
and the set $S_2$ of all formal solutions is given by 
\begin{align*}
S_2 &= S_0 \cap A_3, \\
A_3 &= \{ x\in A\cap B\cap C \;|\; \gx\hx \geq 0 \}.
\end{align*}
\label{prop:root-difference0-prop-four}
\end{proposition}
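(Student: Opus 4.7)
The plan is to split on whether $\hx=0$ and apply Proposition~\ref{prop:one-radical} to the rewritten equation $\sqfx = \gx/\hx$ on the region where $\hx\neq 0$. For the strong solution set $S_1$, fix $x\in A\cap B\cap C$. If $\hx\neq 0$, dividing both sides by $\hx$ makes the equation equivalent to $\sqfx = \gx/\hx$, and Proposition~\ref{prop:one-radical} applied with $f$ and the rational function $g/h$ characterizes the solutions as those satisfying $\fx = [\gx/\hx]^2$ together with $\gx/\hx\geq 0$; multiplying the first condition by $[\hx]^2$ and using $[\hx]^2 > 0$ to rewrite the sign condition as $\gx\hx\geq 0$, this is exactly $x\in S_0\cap A_1$. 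If $\hx=0$, the equation forces $\gx=0$ and the strong-solution requirement further demands $\fx\geq 0$; every such point lies automatically in $S_0$ because both sides of its defining equation vanish, so this subcase corresponds to $x\in S_0\cap A_2$. Taking the union of the two subcases gives $S_1 = S_0\cap(A_1\cup A_2)$.

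For the formal solution set $S_2$, I first rule out the combination $\hx\neq 0$ with $\fx < 0$. In that situation one could write $\sqfx = ai$ for some $a > 0$, so that $\hx\sqfx = a\hx i$ has nonzero imaginary part $a\hx$, contradicting $\gx\in\bbR$. Hence every formal solution satisfies either $\hx=0$ or $\fx\geq 0$. In the branch $\hx=0$, the equation reduces to $\gx=0$ regardless of the sign of $\fx$, so $x\in S_0$ and $\gx\hx = 0\geq 0$, placing $x\in S_0\cap A_3$. In the branch $\hx\neq 0$ with $\fx\geq 0$, the strong-solution argument runs verbatim and places $x\in S_0\cap A_1\subseteq S_0\cap A_3$. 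Conversely, any $x\in S_0\cap A_3$ is a formal solution: when $\hx = 0$, $S_0$ forces $\gx=0$ and the equation $\hx\sqfx = 0 = \gx$ trivially holds regardless of $\sqfx$; when $\hx\neq 0$, $S_0$ together with $[\hx]^2 > 0$ gives $\fx = [\gx/\hx]^2\geq 0$ and $\gx\hx\geq 0$ gives $\gx/\hx\geq 0$, so Proposition~\ref{prop:one-radical} in the reverse direction yields $\sqfx = \gx/\hx$ and hence $\hx\sqfx = \gx$. This establishes $S_2 = S_0\cap A_3$.

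The main obstacle I anticipate is the bookkeeping of the degenerate points where $\hx=0$. The definitions of $A_2$ and $A_3$ do not explicitly list the condition $\gx=0$ in that branch, but this is harmless because the intersection with $S_0$ automatically enforces it, and $A_3$ correctly absorbs the $\hx=0$ branch for formal solutions since $\gx\hx = 0$ trivially satisfies $\gx\hx\geq 0$. The asymmetry between strong and formal solutions is precisely the following: the strong case still needs $\fx\geq 0$ in the $\hx=0$ branch to keep $\sqfx$ real, which is recorded in $A_2$, whereas in the formal case the product $\hx\sqfx$ already vanishes and the value of $\sqfx$ becomes irrelevant, so no sign condition on $\fx$ is needed. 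A minor care point is that when $\hx\neq 0$ the application of Proposition~\ref{prop:one-radical} is to the rational function $g/h$ rather than to $g$ and $h$ separately, which is consistent with its stated scope.
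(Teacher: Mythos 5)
Your proposal is correct and follows essentially the same route as the paper: split on $\hx=0$ versus $\hx\neq 0$, reduce the latter case to $\sqfx=\gx/\hx$ and invoke Proposition~\ref{prop:one-radical}, and observe that dropping the requirement $\fx\geq 0$ in the $\hx=0$ branch merges $A_1\cup A_2$ into $A_3$ for the formal solutions. The only cosmetic difference is that you re-derive, via an explicit imaginary-part contradiction, the fact that no additional formal solutions arise when $\hx\neq 0$, whereas the paper simply cites the $S_1=S_2$ clause of Proposition~\ref{prop:one-radical}.
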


\begin{proof}
\emph{Strong solutions:} We begin by determining the set $S_1$ of all strong solutions. Let $x\in A\cap B\cap C$ be given. All strong solutions must satisfy $\fx\geq 0$. We proceed by distinguishing between the following cases: 

\noindent
\emph{Case 1:} Assume that $\hx\neq 0$. Then, using Proposition~\ref{prop:one-radical}, we can solve for $x$ as follows: 
\begin{align*}
\hx\sqfx &= \gx
\ifonlyif
\sqfx = \frac{\gx}{\hx} && [\text{via } \hx\neq 0] \\
&\ifonlyif
\fx =  \fracp{\gx}{\hx}^2  \land \frac{\gx}{\hx}\geq 0   && [\text{via Proposition~\ref{prop:one-radical}}] \\
&\ifonlyif
[\hx]^2 \fx = [\gx]^2 \land \gx\hx\geq 0 \land \hx\neq 0 \\
&\ifonlyif
x\in S_0 \cap A_1.
\end{align*}
We note that since all solutions $x\in S_0 \cap A_1$ satisfy $\fx = [\gx/\hx]^2$, they also satisfy the strong solution requirement $\fx\geq 0$. 

\noindent
\emph{Case 2:} Assume that $\hx =0$. Then, it follows that 
\begin{align*}
\hx\sqfx = \gx
&\ifonlyif
0\sqfx = \gx
\ifonlyif
0 = \gx
\ifonlyif
0 = [\gx]^2 \\
&\ifonlyif
[\hx]^2 \fx = [\gx]^2.
\end{align*}
 Note that the condition $\fx\geq 0$ is now needed to ensure that we limit ourselves to strong solutions. 

We conclude that the set of all strong solutions is given by $S_1 = (S_0 \cap A_1) \cup (S_0 \cap A_2) = S_0 \cap (A_1 \cup A_2)$. 

\emph{Formal solutions:} The argument for deriving the set $S_2$ of all formal solutions is almost identical to the argument given above for finding the strong solutions. The only difference is that for Case 2 (i.e. under the assumption $\hx =0$), we remove the restriction $\fx\geq 0$. For Case 1 (i.e. under the assumption $\hx\neq 0$), the set of strong and formal solutions coincide, in accordance with Proposition~\ref{prop:one-radical}. It follows that the set of all formal solutions is given by $S_2 = S_0 \cap (B_1 \cup B_2)$ with 
\begin{align*}
B_1 &= \{ x\in A\cap B\cap C \;|\; \gx\hx\geq 0 \land \hx\neq 0 \}, \\
B_2 &= \{ x\in A\cap B\cap C \;|\; \hx  = 0 \}.
\end{align*}
Since $B_1 \cup B_2 = \{  x\in A\cap B\cap C \;|\;  \gx\hx\geq 0 \} = A_3$ we conclude that $S_2 = S_0 \cap A_3$. 
\end{proof}

\begin{proposition}
Consider the equation $\sqfx-\sqgx = \hx$ with $f: A\to\bbR$ and $g: B\to\bbR$ and $h: C\to\bbR$ polynomial or rational functions with $A\subseteq\bbR$ and $B\subseteq\bbR$ and $C\subseteq\bbR$. Then the set $S_1$ of all strong solutions is given by
\begin{align*}
S_1 &= S_0 \cap A_1 \cap A_2 \cap A_3, \\
S_0 &= \{ x\in A\cap B\cap C \;|\;  4[\hx]^2 \fx = [\fx+[\hx]^2-\gx]^2 \}, \\
A_1 &= \{ x\in A\cap B\cap C \;|\;  \fx\geq 0 \}, \\
A_2 &= \{ x\in A\cap B\cap C \;|\;  \sqfx-\hx\geq 0 \}, \\
A_3 &= \{ x\in A\cap B\cap C \;|\; \hx [\fx+[\hx]^2-\gx] \geq 0 \},
\end{align*}
 and the set $S_2$ of all formal solutions is given by 
\begin{align*}
S_2 &= (S_0 \cap A_1 \cap A_2 \cap A_3)\cup B_1, \\
B_1 &= \{ x\in A\cap B\cap C \;|\; \fx = \gx <0 \land \hx =0 \}.
\end{align*}
\label{prop:root-difference0-prop-five}
\end{proposition}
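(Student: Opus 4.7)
The plan is to mirror the informal procedure preceding the proposition: I will first derive the set $S_1$ of strong solutions by one application of Proposition~\ref{prop:one-radical} followed by one application of Proposition~\ref{prop:root-difference0-prop-four}, and will then characterize the additional formal solutions by a short case analysis on the signs of $\fx$ and $\gx$.

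For the strong solutions, I would begin by observing that any strong solution must satisfy $\fx\geq 0$ so that $\sqfx\in\bbR$, i.e.\ $x\in A_1$. Rewriting the equation as $\sqgx = \sqfx-\hx$ and applying Proposition~\ref{prop:one-radical} would give the conjunction $\gx = (\sqfx-\hx)^2$ and $\sqfx-\hx\geq 0$; the latter is exactly $x\in A_2$. Expanding the former and isolating the remaining radical produces $\hx\sqfx = [\fx+(\hx)^2-\gx]/2$, to which I would apply Proposition~\ref{prop:root-difference0-prop-four}. The polynomial identity that results is $4[\hx]^2\fx = [\fx+(\hx)^2-\gx]^2$, which is exactly $x\in S_0$, together with the sign condition defining $A_3$. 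One must verify that the two-branch restriction set of Proposition~\ref{prop:root-difference0-prop-four} collapses to the single set $A_3$ in our setting: on $\hx\neq 0$ the two sign conditions agree, and on $\hx=0$ both are automatic once $\fx\geq 0$ is imposed and $A_3$ becomes vacuously true. This would yield $S_1 = S_0\cap A_1\cap A_2\cap A_3$.

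For the formal solutions, I would let $x\in A\cap B\cap C$ be a formal solution and split into four cases according to the signs of $\fx$ and $\gx$. If $\fx\geq 0$ and $\gx\geq 0$, all radicals are real and $x$ is already a strong solution, hence in $S_1$. If exactly one of $\fx,\gx$ is negative, then writing the offending radical as $\pm ai$ with $a>0$ yields a left-hand side with nonzero imaginary part, contradicting $\hx\in\bbR$. The remaining case is $\fx<0$ and $\gx<0$: substituting $\sqfx = i\sqrt{-\fx}$ and $\sqgx = i\sqrt{-\gx}$, the equation becomes $i(\sqrt{-\fx}-\sqrt{-\gx}) = \hx$, and since $\hx\in\bbR$ this forces $\sqrt{-\fx} = \sqrt{-\gx}$ and $\hx = 0$; equivalently, $\fx = \gx < 0$ and $\hx = 0$, which is precisely $x\in B_1$. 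Conversely, any $x\in B_1$ satisfies $\sqfx-\sqgx = i\sqrt{-\fx}-i\sqrt{-\gx} = 0 = \hx$, so $B_1\subseteq S_2$; and since $B_1$ forces $\fx<0$ while $A_1$ requires $\fx\geq 0$, the sets $B_1$ and $S_1$ are disjoint. Combining, the set of all formal solutions would be $S_2 = S_1 \cup B_1 = (S_0\cap A_1\cap A_2\cap A_3)\cup B_1$.

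The step I expect to be the main obstacle is the bookkeeping in the strong-solution argument, specifically the careful verification that the two-branch restriction set of Proposition~\ref{prop:root-difference0-prop-four} collapses to the single set $A_3$ once $A_1$ is in force. Everything else is a mechanical chain of logical equivalences together with the four-case sign analysis on $\fx$ and $\gx$.
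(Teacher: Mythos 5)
Your proposal is correct and follows essentially the same route as the paper's proof: impose $\fx\geq 0$, apply Proposition~\ref{prop:one-radical} and then Proposition~\ref{prop:root-difference0-prop-four} (noting that its two-branch restriction collapses to $A_3$ under $\fx\geq 0$) to get $S_1$, and then a sign-case analysis on $\fx$ and $\gx$ showing the only non-strong formal solutions arise when $\fx=\gx<0$ and $\hx=0$. The only difference is cosmetic: the paper's case split for formal solutions is organized by the sign of $\fx$ first (rerunning the equivalence chain when $\fx\geq 0$), whereas you split on the sign pair $(\fx,\gx)$ and dispatch the $\fx\geq 0,\ \gx\geq 0$ case by observing that formal and strong solutions coincide there.
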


\begin{proof}
\emph{Strong solutions:} We begin with the argument that determines the set of all strong solutions. Let $ x\in A\cap B\cap C$ be given. All strong solutions must satisfy $\fx\geq 0$ and $\gx\geq 0$, however, only the condition $\fx\geq 0$ is needed to carry the argument forward, so let us assume just that the chosen $x$ satisfies $\fx\geq 0$. Then, using Proposition~\ref{prop:one-radical} twice, we have: 
\begin{align}
\sqfx - \sqgx = \hx 
&\ifonlyif 
\sqgx = \sqfx - \hx \nonumber \\
&\ifonlyif 
\systwo{\gx = [\sqfx-\hx]^2}{\sqfx-\hx\geq 0,}
\label{eq:root-difference0-prop-five-eq-one}
\end{align}
and 
\begin{align}
\gx = [\sqfx-\hx]^2
&\ifonlyif 
\gx = \fx-2\hx\sqfx+[\hx]^2 \nonumber \\
&\ifonlyif 
2\hx\sqfx = \fx-\gx+[\hx]^2 \nonumber \\
&\ifonlyif 
\systwo{4(\hx)^2 \fx = (\fx-\gx+(\hx)^2)^2}{\hx [\fx-\gx+(\hx)^2] \geq 0.}
\label{eq:root-difference0-prop-five-eq-two}
\end{align}
The first application of Proposition~\ref{prop:one-radical}  is justified because we are limiting ourselves to finding only strong solutions. The last equivalence leading to Eq.~\eqref{eq:root-difference0-prop-five-eq-two} follows from Proposition~\ref{prop:root-difference0-prop-four},  noting that, given the a priori assumption that $\fx\geq 0$, we do not need to consider separately the possibilities $\hx=0$ vs $\hx\neq 0$, corresponding to the sets $A_1, A_2$ in the statement of Proposition~\ref{prop:root-difference0-prop-four}, since their union then simplifies to the definition of $A_3$ in the statement of Proposition~\ref{prop:root-difference0-prop-four}. Combining Eq.~\eqref{eq:root-difference0-prop-five-eq-one} and Eq.~\eqref{eq:root-difference0-prop-five-eq-two} gives, under the assumption $\fx\geq 0$, that
\begin{align}
\sqfx-\sqgx = \hx
&\ifonlyif
\systhree{4(\hx)^2 \fx = [\fx-\gx+(\hx)^2)^2}{\hx [\fx-\gx+(\hx)^2] \geq 0}{\sqfx-\hx\geq 0} \nonumber \\
&\ifonlyif x\in S_1 = S_0 \cap A_1 \cap A_2 \cap A_3,
\label{eq:root-difference0-prop-five-eq-three}
\end{align}
which confirms our claim regarding the strong solutions set $S_1$. 

\emph{Formal solutions:} We now determine the solution set $S_2$ of all formal solutions. Let $x\in A\cap B\cap C$ be given, with no other restrictions imposed on $x$. We note that 
\begin{equation}
\sqfx-\sqgx=\hx \ifonlyif \sqgx = \sqfx-\hx.
\label{eq:root-difference0-prop-five-eq-four}
\end{equation}
We cannot apply Proposition~\ref{prop:one-radical} because the right-hand-side of Eq.~\eqref{eq:root-difference0-prop-five-eq-four} may be complex, if $\fx <0$. Therefore, in order to move the argument forward, we distinguish between the following cases:

\noindent
\emph{Case 1:} Assume that $\fx\geq 0$. Then Eq.~\eqref{eq:root-difference0-prop-five-eq-one}, Eq.~\eqref{eq:root-difference0-prop-five-eq-two}, and Eq.~\eqref{eq:root-difference0-prop-five-eq-three} can be derived yet again by the exact same argument used to find all strong solutions. As a result, this case contributes the formal solutions in the set $S_0 \cap A_1 \cap A_2 \cap A_3$.

\noindent
\emph{Case 2:} Assume that $\fx <0$ and $\gx\geq 0$. Then it follows that
\begin{align*}
\Im (\sqgx) &= \Im (\sqfx-\hx) && [\text{via the equation}] \\
&= \Im (\sqfx) && [\text{via } \hx\in\bbR] \\
&\neq 0, && [\text{via } \fx <0]
\end{align*}
 which is a contradiction, since $\gx\geq 0 \implies \Im (\sqgx) = 0$, consequently, this subcase does not contribute any additional solutions. 

\noindent
\emph{Case 3:} Assume that $\fx <0$ and $\gx <0$ and $\hx\neq 0$. Then it follows that, 
\begin{align*}
\Re (\sqgx) &= \Re (\sqfx-\hx) && [\text{via the equation}] \\
&= \Re (-\hx) && [\text{via } \fx <0] \\
&\neq 0, && [\text{via } \hx\neq 0]
\end{align*}
which is a contradiction, since $\gx<0 \implies \Re (\sqgx) =0$, consequently this subcase does not contribute any additional solutions either. 

\noindent
\emph{Case 4:} Assume that $\fx <0$ and $\gx <0$ and $\hx =0$. Then it follows that 
\begin{align*}
\sqgx &= \sqfx-\hx
\ifonlyif
\sqgx = \sqfx && [\text{via } \hx =0] \\
&\ifonlyif
i\sqrt{-\gx} = i\sqrt{-\fx} \\
&\ifonlyif
\sqrt{-\gx} = \sqrt{-\fx} \\
&\ifonlyif
-\gx = -\fx &&[\text{via } \fx <0 \land \gx <0] \\
&\ifonlyif
\gx = \fx.
\end{align*}
This results in possible additional formal solutions given by the set 
\begin{equation*}
B_1 = \{ x\in A\cap B\cap C \;|\; \fx = \gx <0 \land \hx =0 \}.
\end{equation*}
From the above argument, it follows that the set of all formal solutions is given by $S_2 = (S_0\cap A_1 \cap A_3 \cap A_4)\cup B_1$. 
\end{proof}

\begin{example}
The equation $\sqrt{2a-x}-\sqrt{x-2b}=x-(a+b)$ with $a<b$, has no strong solutions and has $x=a+b$ as a formal, but not strong, solution.
\label{ex:root-differences-example}
\end{example}

\begin{solution}
We employ Proposition~\ref{prop:root-difference0-prop-five} using $\fx=2a-x$, $\gx=x-2b$, and $\hx=x-(a+b)$, all defined on $\bbR$, and under the assumption $a<b$. The restriction set $A_1$ is given by
\begin{equation*}
A_1=\{x\in\bbR \;|\; \fx\geq 0\}=\{x\in\bbR\;|\; 2a-x\geq 0\}=(-\infty, 2a].
\end{equation*}
For the restriction set $A_2$, we note that 
\begin{equation*}
x\in A_2 \ifonlyif \sqfx-\hx\geq 0 \ifonlyif \sqrt{2a-x}-[x-(a+b)]\geq 0.
\end{equation*}
To solve the corresponding inequality, we define
\begin{equation*}
\gf (x)=\sqrt{2a-x}-[x-(a+b)],\;\forall x\in A_1,
\end{equation*}
and note that $\gf (2a)=b-a>0$ and that the derivative of $\gf (x)$ satisfies
\begin{equation*}
\gf'(x)=-\frac{2\sqrt{2a-x}+1}{2\sqrt{2a-x}}<0,\;\forall x\in (-\infty, 2a).
\end{equation*}
It follows that $\forall x\in (-\infty, 2a) : \gf (x)>\gf (2a)>0$, and therefore $(-\infty, 2a]\subseteq A_2$, from which it follows that $A_1\cap A_2=(-\infty, 2a]$. For the restriction set $A_3$, we note that
\begin{align*}
x\in A_3 &\ifonlyif \hx [\fx+(\hx)^2-\gx]\geq 0 \\
&\ifonlyif [x-(a+b)][(2a-x)+(x-(a+b))^2-(x-2b)]\geq 0 \\
&\ifonlyif [x-(a+b)]^2 (x-2-(a+b))\geq 0 \\
&\ifonlyif x-(a+b)=0\lor x-2-(a+b)\geq 0 \\
&\ifonlyif x=a+b \lor x\geq 2+(a+b),
\end{align*}
from which it follows that
\begin{equation*}
A_3 = \{a+b\}\cup[2+(a+b),+\infty).
\end{equation*}
To find the intersection $A_1\cap A_2\cap A_3$ we note that the assumption $a<b$ implies that $a+b>a+a=2a \implies a+b\not\in (-\infty, 2a]$ and $2+(a+b)>2+(a+a)>2a$, from which it follows that
\begin{align*}
A_1\cap A_2 \cap A_3 &= (-\infty, 2a]\cap [\{a+b\}\cup [2+(a+b),+\infty)] \\
&= (-\infty, 2a]\cap [2+(a+b), +\infty) = \emptyset.
\end{align*}
We conclude that the equation does not have any strong solutions. To check whether the set $B_1$ contains any additional formal solutions, we note that
\begin{align*}
x\in B_1 &\ifonlyif \systwo{\fx=\gx<0}{\hx=0} \ifonlyif \systwo{2a-x=x-2b<0}{x-(a+b)=0} \\ 
&\ifonlyif \systwo{x=a+b}{(a+b)-2b<0} \ifonlyif x=a+b,
\end{align*}
noting that $a<b$ implies that $(a+b)-2b<0$. We conclude that $x=a+b$ is a formal, but not strong, solution of the original equation.
\end{solution}

\section{Conclusion}

The main result of this article is Proposition~\ref{prop:sum-two-roots-equal-to-function}, Proposition~\ref{prop:sum-of-two-roots-equal-root}, and Proposition~\ref{prop:root-difference0-prop-five} that define the set of strong solutions and the wider set of formal solutions for radical equations with depth 2 that follow the form of Eq.~\eqref{eq:radical-form-four}, Eq.~\eqref{eq:radical-form-five}, Eq.~\eqref{eq:radical-form-six} correspondingly. Similarly to the propositions in our previous article \cite{article:Gkioulekas:radicals-one}, it is worth emphasizing  that the requirement that the functions $f,g,h$ in these propositions be polynomial or rational functions is needed only to justify the results about the solution set of all formal solutions,  and can be removed if we are only interested in the set of all strong solutions, in which case our propositions can be used recursively to tackle a few additional radical equation forms with depth above 2. We have also explained how these propositions translate into rigorous solution procedures and illustrated the solution procedures with solved examples. The proposed procedures allow the elimination of extraneous solutions via inequality restrictions, thereby making it unnecessary to verify each candidate solution explicitly against the original equation. As can be seen from the given examples, this is particularly useful with respect to handling parametric radical equations, or for developing theorems for particular forms of radical equations.

The reported results, combined with the preceding results \cite{article:Gkioulekas:radicals-one}, can be incorporated in mathematics coursework in many ways. For lower-level coursework, students can be taught the informal solution techniques, as described in the beginning of Section 2, Section 3, and Section 4, and use them primarily on non-parametric examples. Instructors have a choice on whether the scope of their teaching should be limited to strong solutions, or whether it should include finding formal solutions as well. In either case, for students that have  already been taught the concept of complex numbers, it is straightforward to at least teach them about the distinction between the two types of solutions. More advanced students can be exposed to the more rigorous presentation of Proposition~\ref{prop:sum-two-roots-equal-to-function}, Proposition~\ref{prop:sum-of-two-roots-equal-root}, Proposition~\ref{prop:root-difference0-prop-five}, and their proofs. The proofs themselves are nice examples of proof-writing, illustrating proof by contradiction, proof by cases, and how to organize a complex proof, all illustrated in the context of an elementary area of mathematics. We provided detailed proofs in order to also demonstrate how the underlying proof-writing technique can be presented to students. Finally, the theorems themselves can serve as a foundation for undergraduate capstone student projects, where they can be used to deeply explore a very wide range of parametric radical equations. The three parametric examples presented in this article are only the tip of the iceberg. 

\section*{Acknowledgements}

I was first introduced to rigorous solution methods for solving radical equations by my high-school teacher, Mr. Alexandros Pistofidis. 

\section*{Disclosure statement}

No potential conflict of interest was reported by the author.

\section*{ORCID}

\emph{Eleftherios Gkioulekas}: \textsf{http://orcid.org/0000-0002-5437-2534}

\bibliographystyle{tfnlm}
\bibliography{references-mathed}

\begin{thebibliography}{10}
\providecommand{\url}[1]{\normalfont{#1}}
\providecommand{\urlprefix}{Available from: }

\bibitem{article:Gkioulekas:radicals-one}
Gkioulekas~E. Using restrictions to accept or reject solutions of radical
  equations. International Journal of Mathematical Education in Science and
  Technology. 2018;\hspace{0pt}49:1278--1292.

\bibitem{article:Nagase:1987}
Nagase~G. Existence of real roots of a radical equation. The Mathematics
  Teacher. 1987;\hspace{0pt}80:369--370.

\bibitem{article:Bompart:1982}
Bompart~B. An alternate method for solving radical equations. The Two-Year
  College Mathematics Journal. 1982;\hspace{0pt}13:198--199.

\bibitem{article:Roberti:1984}
Roberti~J. Radical solutions. The Mathematics Teacher. 1984;\hspace{0pt}77:166.

\bibitem{article:Barrow:1982}
Huff~G, Barrow~D. A minute theory of radical equations. The American
  Mathematical Monthly. 1982;\hspace{0pt}59:320--323.

\bibitem{article:Beach:1952}
Beach~J. Equations involving radicals. School Science and Mathematics.
  1952;\hspace{0pt}52:473--474.

\bibitem{article:O'Hara:1987}
Schwartz~S, Moulton~C, O'Hara~J. Constructing radical equations with two
  roots--a student-generated algorithm. The Mathematics Teacher.
  1987;\hspace{0pt}90:742--744.

\bibitem{article:Gurevich:2003}
Gurevich~V. A reasonable restriction set for solving radical equations. The
  Mathematics Teacher. 2003;\hspace{0pt}96:662--664.

\bibitem{notes:Pistofidis:1989}
Pistofidis~A. Algebra. {A} {L}ukeiou. unpublished lecture notes; Thessaloniki,
  Greece.; 1989; retrieved on 2/26/2018 from https://goo.gl/E7fSEq.

\bibitem{notes:Pistofidis:1990}
Pistofidis~A. Algebra. {B} {L}ukeiou. unpublished lecture notes; Thessaloniki,
  Greece.; 1990; retrieved on 2/26/2018 from https://goo.gl/XZg7Dt.

\bibitem{book:Schwatt:1898}
Fisher~G, Schwatt~I. Text-book of algebra with exercises for secondary schools
  and colleges. part 1. Norwood MA: Norwood Press; 1898. Retrieved on
  04/26/2018 from https://goo.gl/e8jad5.

\bibitem{article:Manning:1970}
Manning~K. A history of extraneous solutions. The Mathematics Teacher.
  1970;\hspace{0pt}63:165--175.

\bibitem{book:Jones:1887}
Oliver~J, Wait~L, Jones~G. A treatise on algebra. Boston MA: J.S. Cushing and
  Co. Printers; 1887. Retrieved on 04/26/2018 from https://goo.gl/5A1fZg.

\bibitem{article:Hill:1896}
Hill~G. Memoir of {J}ames {E}dward {O}liver ; 1896. National Academy of
  Sciences, Retrieved on 06/19/2018 from https://goo.gl/dMsdXP.

\bibitem{article:Taylor:1910}
Taylor~J. Equations. The Mathematics Teacher. 1910;\hspace{0pt}2:135--146.

\bibitem{article:Hegeman:1922}
Hegeman~A. Certain cases of extraneous roots. The Mathematics Teacher.
  1922;\hspace{0pt}15:110--118.

\bibitem{article:Bruce:1931}
Bruce~R. Equivalence of equations in one unknown. The Mathematics Teacher.
  1931;\hspace{0pt}24:238--244.

\bibitem{article:Allendoerfer:1966}
Allendoerfer~C. The method of equivalence or how to cure a cold. The
  Mathematics Teacher. 1966;\hspace{0pt}59:531--535.

\end{thebibliography}

\appendix

\section{Position of numbers relative to the two zeroes of a quadratic}
\label{app:numbers-relative-to-zeroes}

Consider a quadratic function $\fx= ax^2+bx+c$, where we assume, with no loss of generality, that $a>0$, with two real zeroes $x_1, x_2\in\bbR$ such that $x_1 < x_2$. It is well known that $\fx$ will be negative over the interval $(x_1, x_2)$ and positive over the set $(-\infty, x_1)\cup (x_2, +\infty)$. This simple observation can be used to determine the position of arbitrary real numbers $\xi_1, \xi_2\in\bbR$ with $\xi_1 < \xi_2$ relative to the zeroes $x_1, x_2$, using an indirect argument, instead of attempting to directly prove the corresponding inequalities. This is a technique that I first learned from \cite{notes:Pistofidis:1989}, and, as can be seen from Example~\ref{ex:sum-of-two-roots-equal-root} and Example~\ref{ex:root-differences-example}, it can be most useful in testing candidate solutions against the restriction sets, to establish whether the candidate solutions can be accepted or rejected. 

To begin with, it is fairly obvious that
\begin{equation*}
f(\xi_1)<0 \ifonlyif x_1 < \xi_1 < x_2,
\end{equation*}
since the quadratic is negative only between the two zeroes $x_1, x_2$. We can also argue that
\begin{align}
f(\xi_1)<0 \land f(\xi_2)>0 &\ifonlyif x_1<\xi_1<x_2<\xi_2, \label{eq:numbers-relative-to-zeroes-eq-one} \\
f(\xi_1)>0 \land f(\xi_2)<0 &\ifonlyif \xi_1 < x_1 < \xi_2 < x_2. \label{eq:numbers-relative-to-zeroes-eq-two}
\end{align}
To justify Eq.~\eqref{eq:numbers-relative-to-zeroes-eq-one}, we note that $\xi_1$ is between $x_1, x_2$ and $\xi_2$ is not. To determine that $\xi_2\in (x_2, +\infty)$, we take advantage of the assumption $\xi_1<\xi_2$. Eq.~\eqref{eq:numbers-relative-to-zeroes-eq-two} is justified with a similar argument, and Eq.~\eqref{eq:numbers-relative-to-zeroes-eq-one} and Eq.~\eqref{eq:numbers-relative-to-zeroes-eq-two} combined have been sufficient in handling Example~\ref{ex:sum-of-two-roots-equal-root} and Example~\ref{ex:root-differences-example}. 

One additional result, that can be useful for other problems, is that
\begin{equation*}
f(\xi_1)<0 \land f(\xi_2)<0 \ifonlyif x_1<\xi_1<\xi_2<x_2.
\end{equation*}
Last, but not least, when both $f(\xi_1)$ and $f(\xi_2)$ are positive, we have to use instead the following statements:
\begin{align*}
f(\xi_1)>0 \land \xi_1+\frac{b}{2a}<0 &\ifonlyif \xi_1<x_1<x_2, \\
f(\xi_1)>0 \land \xi_1+\frac{b}{2a}>0 &\ifonlyif x_1<x_2<\xi_1,
\end{align*}
where, we determine whether $\xi_1$ lies to the left or to the right of the zeroes $x_1, x_2$ by comparing its location relative to the $x$-coordinate of the quadratic's vertex. 

\end{document}